\newtheorem{theorem}{Theorem}
\theoremstyle{remark}
\newtheorem{example}{\bf Example}
\renewcommand{\d}{{\mathrm d}}
\renewcommand{\Re}{\operatorname{Re}}
\begin{document}

\title{Ramanujan-type formulae for $1/\pi$: The~art~of~translation}

\author{Jes\'us Guillera}
\address{Av.\ Ces\'areo Alierta, 31 esc.~izda 4$^\circ$--A, Zaragoza, SPAIN}
\email{jguillera@gmail.com}

\author{Wadim Zudilin}
\address{School of Mathematical and Physical Sciences,
The University of Newcastle, Callaghan, NSW 2308, AUSTRALIA}
\email{wzudilin@gmail.com}

\thanks{Work of the second author is supported by Australian Research Council grant DP110104419.}

\date{25 December 2012. \emph{Revised}: 19 February 2013}

\subjclass[2000]{Primary 11Y60, 33C20; Secondary 11B65, 11F11, 11Y55, 33F05, 33F10, 40G05, 65B10}
\keywords{Ramanujan-type identities for $1/\pi$; hypergeometric series; arithmetic hypergeometric series; algebraic transformation; asymptotics}

\begin{abstract}
We outline an elementary method for proving numerical hypergeometric identities, in particular, Ramanujan-type identities for $1/\pi$.
The principal idea is using algebraic transformations of arithmetic hypergeometric series to translate non-singular points
into singular ones, where the required constants can be computed using asymptotic analysis.
\end{abstract}

\maketitle
%==================================================

\hbox to\hsize{\hfil
\hbox{\vbox{\hsize=12cm\small\footnotesize\par
\dots I have realised that a good translator deserves honour in our literary circles,
because he is not a craftsman, not copyist, but the artist. He does not photograph the original,
as was commonly believed then, but recreates it creatively. The text is the original material
for his complex and often inspired creativity. Translator is first of all talent.
To translate Balzac he needs to at least partially transform into Balzac,
to assimilate his temper, to get his enthusiasm, his poetic sense of life.

\begin{center}
\textsc{K.\,I.~Chukovsky}, {\em A High Art}
% http://www.chukfamily.ru/Kornei/Prosa/Vysokoe/proknigu.htm
\end{center}
}}}

\section{Introduction}
\label{s1}

In what follows we keep the standard notation
$$
(s)_n=\frac{\Gamma(s+n)}{\Gamma(s)}
$$
for Pochhammer's symbol (also known as shifted factorial). When $n$ is a positive integer we have
$(s)_n=\prod_{k=0}^{n-1}(s+k)$; in particular, $(1)_n=n!$\,.

\smallskip
The quantity $\pi$ is linked to the mathematical legacy of Srinivasa Ramanujan in many different ways.
In his development of the theory of elliptic functions, Ramanujan came up \cite{Ra} with a bunch of computationally efficient
representations of $1/\pi$, like
\begin{align}
\label{rama-ex6}
\sum_{n=0}^{\infty}\frac{(\frac12)_n^3}{n!^3}\,(1+6n)\,\frac{1}{2^{2n}}
&=\frac{4}{\pi},
\\
\label{rama-64}
\sum_{n=0}^{\infty}\frac{(\frac12)_n^3}{n!^3}\,(5+42n)\,\frac{1}{2^{6n}}
&=\frac{16}{\pi},
\displaybreak[2]\\
\label{rama28}
\sum_{n=0}^{\infty}\frac{(\frac12)_n(\frac16)_n(\frac56)_n}{n!^3}\,(8+133n)\biggl(\frac{4}{85}\biggr)^{3n}
&=\frac{85\sqrt{255}}{54\pi},
\displaybreak[2]\\
\label{e00}
\sum_{n=0}^\infty\frac{(\frac12)_n(\frac14)_n(\frac34)_n}{n!^3}\,(3+28n)\biggl(-\frac1{48}\biggr)^n
&=\frac{16}{\pi\sqrt3},
\displaybreak[2]\\
\label{e03}
\sum_{n=0}^\infty\frac{(\frac12)_n(\frac14)_n(\frac34)_n}{n!^3}\,(1123+21460n)\biggl(-\frac1{882^2}\biggr)^n
&=\frac{4\cdot882}\pi,
\displaybreak[2]\\
\label{rama-10-1}
\sum_{n=0}^{\infty}\frac{(\frac12)_n(\frac14)_n(\frac34)_n}{n!^3}\,(1+10n)\,\frac{1}{3^{4n}}
&=\frac{9\sqrt2}{4 \pi},
\displaybreak[2]\\
\label{rama7-4}
\sum_{n=0}^{\infty}\frac{(\frac12)_n(\frac14)_n(\frac34)_n}{n!^3}\,(3+40n)\,\frac{1}{7^{4n}}
&=\frac{49\sqrt3}{9\pi},
\\
\label{e04}
\sum_{n=0}^\infty\frac{(\frac12)_n(\frac14)_n(\frac34)_n}{n!^3}\,(1103+26390n)\,\frac1{99^{4n}}
&=\frac{99^2}{2\pi\sqrt2};
\end{align}
these are equations (28), (29), (34), (36), (39), (41), (42) and (44) on the list in~\cite{Ra}. Ramanujan did not hide
his interest in computing $\pi$; his comment in~\cite{Ra} about identity~\eqref{e04} says
``The last series (44) is extremely rapidly convergent.''

The identities do not look hard. In spite of this, their first proofs were only obtained in the 1980s by the Borweins \cite{Bo}
and by the Chudnovskys~\cite{Chud}. A historical account of contemporary techniques for proving Ramanujan's and Ramanujan-type
formulae as well as their supercongruence relatives can be found in \cite{BaBeCh,Zu,Zu0}. The dominating method which, for example,
works for any formulae in~\eqref{rama-ex6}--\eqref{e04} is based on modular parameterisations of the underlying hypergeometric series.
This modular technique cannot be counted as elementary, and it does not cover certain important generalisations of
Ramanujan's identities such as recently discovered by the first author~\cite{Gu-on-wz,Gu-new,Gu7} formulae for $1/\pi^2$.
Guillera's approach to prove the latter is by using the powerful Wilf--Zeilberger (WZ) machinery, which itself is quite elementary in nature.
Finally, changes of variables allow us to ``translate'' the already derived identities into some harder ones~\cite{CZ,Ro,Zu1,Zu5};
for example, Bailey's cubic transformation \cite[eq.~(5.6)]{GS}
\begin{equation}
\label{bailey1}
\sum_{n=0}^{\infty}\frac{(\frac12)_n^3}{n!^3}\,x^n
=\frac2{(4-x)^{1/2}}\sum_{n=0}^{\infty}\frac{(\frac12)_n(\frac16)_n(\frac56)_n}{n!^3}\biggl(\frac{27x^2}{(4-x)^3}\biggr)^n
\end{equation}
with the choice $x=1/2^6$ translates the WZ provable equation~\eqref{rama-64} into the WZ resistant equation~\eqref{rama28}.
Note, however, that a starting point of any such translation has to be an already established one, by modular or WZ means.

The principal aim of this note is to demonstrate that in many, if not all, cases we may start from a
``semi-Ramanujan'' identity for $1/\pi$ (or for a similar quantity) whose existence only depends on the asymptotic behaviour of a solution
of a Picard--Fuchs equation at its singular point. Then translation brings the identity into ``traditional'' Ramanujan-type formulae.
Our approach is illustrated by several examples, in particular, we prove equations \eqref{rama-ex6}, \eqref{rama28} (hence \eqref{rama-64}
by translating via~\eqref{bailey1}), \eqref{e00}, \eqref{rama-10-1} and \eqref{rama7-4}, while equations \eqref{e03} and \eqref{e04}
(of degrees 37 and 29, respectively) remain beyond the reach of our techniques exposed here.
Ramanujan's and Ramanujan-type identities for $1/\pi$ we discuss in the paper are all classified according to
level and degree \cite{CC2,Coo}. We do not explicitly address the two characteristics (though the level does appear below)
as they are deeply related to modular (hence non-elementary!) parameterisations of the formulae. But it is worth mentioning
that all identities shown in the paper correspond to relatively low degrees. The principal difficulty in dealing with
high-degree Ramanujan-type identities is lack of efficient asymptotic techniques; see Example~\ref{ex4} and our speculative
derivation of the conjectural equation~\eqref{jg74} in Section~\ref{s5}.

The rest of the paper is organised as follows. In Section \ref{s2} we outline the method, while Sections \ref{s3} and \ref{s4}
are dedicated to several illustrative examples. Finally, Section~\ref{s5} highlights further directions of potential applicability
of the method, for example, for proving some experimentally observed formulae for $1/\pi^2$.

\section{Semi-Ramanujan identities}
\label{s2}

The classical theorem attributed to Stolz and Ces\`aro says that {\em if $b_n$ is increasing and unbounded
and
$$
\lim_{n\to\infty} \frac{a_{n+1}-a_n}{b_{n+1}-b_n} = \gamma,
$$
then
$$
\lim_{n\to\infty} \frac{a_n}{b_n} = \gamma
$$
as well}. Take as an example the $n$-th partial sums $a_n=\sum_{k=0}^nc_kr^k$ of an analytical solution $\sum_{n=0}^\infty c_nx^n$
to a linear differential equation, a solution which converges in the disc of radius $r$ where $r>0$
is a \emph{regular} singularity of the equation. If $c_n\sim\operatorname{const}\cdot r^{-n}n^{\alpha-1}$ for some $\alpha>0$,
then $\alpha$~is automatically a local exponent of the differential equation at $x=r$; assume that the exponent has multiplicity~1.
Then
$$
\sum_{n=0}^\infty c_nx^n=\gamma\Bigl(1-\frac xr\Bigr)^\alpha+\text{higher powers of }\Bigl(1-\frac xr\Bigr)
\qquad\text{as}\quad x\to r^-,
$$
and to find the leading coefficient $\gamma$ we can take $b_n=\sum_{k=0}^n(\alpha)_k/k!$,
the $n$-th partial sums of the power series of $(1-x/r)^{-\alpha}$ at $x=r$, and use the Stolz--Ces\`aro theorem:
\begin{equation}
\gamma=\lim_{x\to r^-}\frac{\sum_{n=0}^\infty c_nx^n}{(1-x/r)^{-\alpha}}=\lim_{n\to\infty}\frac{c_nr^n}{(\alpha)_n/n!};
\label{SC}
\end{equation}
the interchange of the limits is legitimate because the quotient on the left-hand side represents a continuous function
in the compact interval $0\le x\le r$. Below we typically choose $\alpha=\frac12$ in~\eqref{SC},
and our starting example, which has motivated the method in this paper and whose proof by means of the
argument above is due to A.~Mellit~\cite{Mel}, is as follows.

\begin{theorem}
\label{th1}
For $m$ a positive integer and $s_1,\dots,s_m$ in the interval $0<s<1$, the following limit is true:
\begin{equation}
\lim_{x\to 1^-} \sqrt{1-x} \sum_{n=0}^\infty \frac{(\frac12)_n (s_1)_n(1-s_1)_n\cdots (s_m)_n(1-s_m)_n}{n!^{2m+1}}n^m x^n
= \prod_{j=1}^m \frac{\sin\pi s_j}{\pi}.
\label{lim-hyper}
\end{equation}
\end{theorem}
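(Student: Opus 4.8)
The plan is to apply the Stolz--Ces\`aro machinery of equation~\eqref{SC} directly with $r=1$ and $\alpha=\frac12$, so that the required limit becomes
$$
\lim_{n\to\infty}\frac{c_n}{(\frac12)_n/n!},
\qquad
c_n=\frac{(\frac12)_n(s_1)_n(1-s_1)_n\cdots(s_m)_n(1-s_m)_n}{n!^{2m+1}}\,n^m.
$$
First I would check that this is legitimate: the underlying series is a solution of a Picard--Fuchs (generalized hypergeometric) equation with a regular singularity at $x=1$, and one must verify that $\frac12$ is indeed a local exponent there of multiplicity one and that $c_n\sim\operatorname{const}\cdot n^{-1/2}$, so that the hypotheses behind~\eqref{SC} (continuity of the quotient on $[0,1]$, the Stolz--Ces\`aro conclusion) all apply. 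Granting this, the whole theorem reduces to a clean asymptotic evaluation of a ratio of products of Pochhammer symbols.

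The key step is then the asymptotics of $(s)_n(1-s)_n/n!^2$ as $n\to\infty$. Writing $(s)_n=\Gamma(s+n)/\Gamma(s)$ and similarly for the others, one has
$$
\frac{(s)_n(1-s)_n}{n!^2}=\frac{1}{\Gamma(s)\Gamma(1-s)}\cdot\frac{\Gamma(n+s)\Gamma(n+1-s)}{\Gamma(n+1)^2}.
$$
By the reflection formula $\Gamma(s)\Gamma(1-s)=\pi/\sin\pi s$, the prefactor is $(\sin\pi s)/\pi$, which is exactly the factor appearing on the right-hand side of~\eqref{lim-hyper}. For the remaining ratio of Gamma functions I would invoke the standard estimate $\Gamma(n+a)/\Gamma(n+b)=n^{a-b}(1+O(1/n))$, which gives $\Gamma(n+s)\Gamma(n+1-s)/\Gamma(n+1)^2=n^{s-1}\cdot n^{-s}(1+O(1/n))=n^{-1}(1+O(1/n))$. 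Hence
$$
\frac{(s)_n(1-s)_n}{n!^2}=\frac{\sin\pi s}{\pi}\cdot\frac1n\,(1+O(1/n)).
$$
Multiplying $m$ such factors produces $n^{-m}\prod_{j=1}^m(\sin\pi s_j)/\pi$, and the $n^m$ in $c_n$ cancels the $n^{-m}$ exactly. Finally $(\frac12)_n/n!=\Gamma(n+\frac12)/(\Gamma(\frac12)\,\Gamma(n+1))\sim n^{-1/2}/\sqrt\pi$, but note this same factor $(\frac12)_n/n!$ also sits inside $c_n$, so it cancels in the ratio $c_n/((\frac12)_n/n!)$ and what survives is precisely $\prod_{j=1}^m(\sin\pi s_j)/\pi$, as claimed.

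The main obstacle, such as it is, is not the computation but justifying the interchange of limits: one must be sure the left-hand side of~\eqref{lim-hyper} really is of the form $\gamma(1-x)^{1/2}+o((1-x)^{1/2})$ near $x=1$ rather than, say, having a logarithmic term or an exponent $\frac12$ of higher multiplicity, which would invalidate the use of~\eqref{SC}. This is handled by identifying the series as $\theta$ applied (with $\theta=x\,\mathrm d/\mathrm dx$, whence the $n^m$) to a balanced ${}_{2m+2}F_{2m+1}$-type function, whose behaviour at the unit singularity is governed by the indicial equation; the parameters are arranged so that $x=1$ is the relevant regular singular point with the exponent $\frac12$ simple, and the partial-sum estimate $c_n\sim\operatorname{const}\cdot n^{-1/2}$ computed above confirms this a posteriori. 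Once that structural point is granted, the proof is the two-line Gamma-asymptotics above together with the reflection formula.
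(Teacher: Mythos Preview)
Your argument is correct and essentially the same as the paper's: both reduce~\eqref{lim-hyper} via the Stolz--Ces\`aro formula~\eqref{SC} with $r=1$, $\alpha=\tfrac12$ to the single limit $\lim_{n\to\infty}(s)_n(1-s)_n\,n/n!^2=\sin\pi s/\pi$. The only cosmetic difference is that the paper obtains this limit directly from Euler's definition $\Gamma(s)=\lim_{n\to\infty}n!\,n^{s-1}/(s)_n$, whereas you route it through $\Gamma(n+a)/\Gamma(n+b)\sim n^{a-b}$ and the reflection formula; these are equivalent, and your more careful discussion of why the exponent $\tfrac12$ is simple at $x=1$ is a welcome elaboration of what the paper states in one line before the theorem.
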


\begin{proof}
Here we only use the simple fact
$$
\lim_{n\to\infty}\frac{(s)_n(1-s)_nn}{n!^2}
=\frac1{\Gamma(s)\,\Gamma(1-s)}=\frac{\sin\pi s}\pi
$$
which follows from Euler's definition
$$
\Gamma(s)=\lim_{n\to\infty}\frac{n!\,n^{s-1}}{(s)_n}
$$
of the gamma function.
\end{proof}

It is equation \eqref{lim-hyper} which we may call a semi-Ramanujan formula for $1/\pi$.
In his manuscript \cite{Ay} A.~Aycock, an undergraduate student of Johannes-Gutenberg-Universit\"at in Mainz,
uses some known Ramanujan-type series for $1/\pi$ and the translation method~\cite{Zu5} to prove
the limit in Theorem~\ref{th1} when $m=1$ and $s\in\{\frac12,\frac13,\frac14,\frac16\}$.
But because those instances are already known, even in a greater generality, we can\dots\ reverse Aycock's argument!

Let us do so, using Bailey's cubic transformation \cite[eq.~(5.3)]{GS}
\begin{equation}
\label{bailey2}
(1-4x)^{1/2}\sum_{n=0}^{\infty}\frac{(\frac12)_n^3}{n!^3}\,x^n
=\sum_{n=0}^{\infty}\frac{(\frac12)_n(\frac16)_n(\frac56)_n}{n!^3}\biggl(-\frac{27x}{(1-4x)^3}\biggr)^n
\end{equation}
which is a companion of~\eqref{bailey1}.

\begin{example}
\label{ex0}
Applying $\theta_x:=x\dfrac{\d}{\d x}$ to the both sides of~\eqref{bailey2} we arrive at
\begin{multline}
\label{bailey2a}
(1-4x)^{-1/2}\sum_{n=0}^{\infty}\frac{(\frac12)_n^3}{n!^3}\,\bigl(-2x+(1-4x)n\bigr)x^n
\\
=\frac{1+8x}{1-4x}\sum_{n=0}^{\infty}\frac{(\frac12)_n(\frac16)_n(\frac56)_n}{n!^3}\,n\biggl(-\frac{27x}{(1-4x)^3}\biggr)^n,
\end{multline}
where we let $x\to(-1/8)^+$. Note that for $y=y(x):=-27x/(1-4x)^3$ we have
$$
1-y=\frac{(1+8x)^2(1-x)}{(1-4x)^3},
$$
therefore
\begin{multline*}
\lim_{x\to(-1/8)^+}\frac{(1+8x)(1-x)^{1/2}}{(1-4x)^{3/2}}
\sum_{n=0}^{\infty}\frac{(\frac12)_n(\frac16)_n(\frac56)_n}{n!^3}\,n\biggl(-\frac{27x}{(1-4x)^3}\biggr)^n
\\
=\lim_{y\to1^-}(1-y)^{1/2}\sum_{n=0}^{\infty}\frac{(\frac12)_n(\frac16)_n(\frac56)_n}{n!^3}\,ny^n
=\frac1{2\pi}
\end{multline*}
by Theorem~\ref{th1}. Comparing the limit with the one for the left-hand side of~\eqref{bailey2a} we deduce
\begin{align*}
\frac1{2\pi}
&=\frac{(1-x)^{1/2}}{1-4x}\sum_{n=0}^{\infty}\frac{(\frac12)_n^3}{n!^3}\,\bigl(-2x+(1-4x)n\bigr)x^n\bigg|_{x=-1/8}
\\
&=\frac1{4\sqrt2}\sum_{n=0}^{\infty}\frac{(\frac12)_n^3}{n!^3}\,(1+6n)\biggl(-\frac18\biggr)^n,
\end{align*}
a formula for $1/\pi$ of Ramanujan type.
\qed
\end{example}

Let us try to formalise this approach.
Assume we are trying to prove a formula for $1/\pi$ (or another constant in general) attached to the point
$y=y_0$ and the generating function $\sum_{n=0}^\infty A_ny^n$, the latter satisfying an arithmetic differential equation
(usually of order~3). Then we look for an algebraic substitution $y=y(x)$ and accompanying algebraic multiple $g=g(x)$ such that
\begin{equation}
\label{tran}
\sum_{n=0}^\infty C_nx^n
=g(x)\sum_{n=0}^\infty A_ny(x)^n
\end{equation}
also satisfies an arithmetic differential equation, and $y_0=y(r)$ where $r$ is precisely the radius of convergence
of $\sum_{n=0}^\infty C_nx^n$. Suppose furthermore that we can then compute the limit
$$
\lim_{x\to r^-}\sqrt{1-x/r}\sum_{n=0}^\infty C_nnx^n
=\lim_{n\to\infty}\frac{C_nnr^n}{(\frac12)_n/n!}=\gamma,
$$
using Theorem~\ref{th1} or other means. Then a suitable combination $\sum_{n=0}^\infty A_n(a+bn)y_0^n$,
obtained by differentiation and taking the limit as $x\to r$ in~\eqref{tran},
is an algebraic multiple of~$\gamma$.

\smallskip
The series we deal with so far are special cases of the hypergeometric series
$$
{}_mF_{m-1}\biggl(\begin{matrix} a_1, \, \dots, \, a_m \\ b_1, \, \dots, \, b_{m-1} \end{matrix} \biggm|x\biggr)
=\sum_{n=0}^\infty\frac{(a_1)_n\dotsb(a_m)_n}{n!\,(b_1)_n\dotsb(b_{m-1})_n}\,x^n
$$
which converges in the disc $|x|<1$. If all the parameters $a_1,\dots,a_m$ and $b_1,\dots,b_{m-1}$ are positive,
the function defined by the series can be analytically continued to $\mathbb C\setminus[0,+\infty)$ via the Barnes integral,
\begin{multline}
\label{barnes}
{}_mF_{m-1}\biggl(\begin{matrix} a_1, \, \dots, \, a_m \\ b_1, \, \dots, \, b_{m-1} \end{matrix} \biggm|x\biggr)
=\frac{\Gamma(b_1)\dotsb\Gamma(b_{m-1})}{\Gamma(a_1)\dotsb\Gamma(a_m)}
\\ \times\frac1{2\pi i}
\int_{-a-i\infty}^{-a+i\infty}\frac{\Gamma(a_1+t)\dotsb\Gamma(a_m+t)\,\Gamma(-t)}{\Gamma(b_1+t)\dotsb\Gamma(b_{m-1}+t)}\,(-x)^t\,\d t,
\end{multline}
where the line of integration $\Re t=-a$ separates the poles $t=0,1,2,\dots$ of $\Gamma(-t)$ from those of
$\Gamma(a_1+t)\dotsb\Gamma(a_m+t)$; for example, we can take $a$ to be any in the range $0<a<\min\{a_1,\dots,a_m\}$.
With the hypergeometric function defined by~\eqref{barnes} in mind we have another (well-known!) family of semi-Ramanujan identities.

\begin{theorem}
\label{th2}
Let the parameters $a_1,\dots,a_m$ and $b_1,\dots,b_{m-1}$ of the hypergeometric function be positive
and $a_m<\min\{a_1,\dots,a_{m-1}\}$ the smallest one in the first group. Then
\[
\lim_{x\to-\infty} (-x)^{a_m}\,{}_mF_{m-1}\biggl(\begin{matrix} a_1, \, \dots, \, a_m \\ b_1, \, \dots, \, b_{m-1} \end{matrix} \biggm|x\biggr)
=\prod_{j=1}^{m-1}\frac{\Gamma(a_j-a_m)}{\Gamma(a_j)}\,\frac{\Gamma(b_j)}{\Gamma(b_j-a_m)}.
\]
\end{theorem}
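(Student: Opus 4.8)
The plan is to extract the asymptotics directly from the Barnes integral~\eqref{barnes}. Write $w=-x\to+\infty$. Since $a_m=\min\{a_1,\dots,a_m\}$, we may choose the abscissa $a$ in~\eqref{barnes} with $0<a<a_m$, so that
\[
{}_mF_{m-1}\biggl(\begin{matrix} a_1,\dots,a_m\\ b_1,\dots,b_{m-1}\end{matrix}\biggm|x\biggr)
=\frac{\Gamma(b_1)\dotsb\Gamma(b_{m-1})}{\Gamma(a_1)\dotsb\Gamma(a_m)}\,\frac1{2\pi i}\int_{-a-i\infty}^{-a+i\infty}F(t)\,w^t\,\d t,
\qquad F(t):=\frac{\prod_{i=1}^m\Gamma(a_i+t)\,\Gamma(-t)}{\prod_{j=1}^{m-1}\Gamma(b_j+t)}.
\]
First I would push the line of integration to the left, from $\Re t=-a$ to $\Re t=-a_m-\delta$, with $\delta>0$ chosen smaller than $1$ and than $\min_{1\le j\le m-1}(a_j-a_m)$. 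In the closed strip $-a_m-\delta\le\Re t\le-a$ the integrand has exactly one, simple, pole, namely $t=-a_m$ coming from the factor $\Gamma(a_m+t)$: the poles of $\Gamma(-t)$ sit at $t=0,1,2,\dots$ to the right, the poles of $\Gamma(a_j+t)$ with $j<m$ at $t=-a_j,-a_j-1,\dots$ which are all $<-a_m-\delta$, and the next pole of $\Gamma(a_m+t)$ is at $t=-a_m-1<-a_m-\delta$. This is exactly where the strictness of $a_m<a_j$ for $j<m$ is used.

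Next I would read off the residue. Since $\Gamma(a_m+t)$ has residue $1$ at $t=-a_m$ and $\Gamma(-t)\big|_{t=-a_m}=\Gamma(a_m)$, the residue of $F(t)\,w^t$ at $t=-a_m$ is $\bigl(\Gamma(a_m)\prod_{j<m}\Gamma(a_j-a_m)\big/\prod_{j<m}\Gamma(b_j-a_m)\bigr)\,w^{-a_m}$. The residue theorem applied to~\eqref{barnes} then gives, after multiplying by $w^{a_m}$,
\[
w^{a_m}\,{}_mF_{m-1}\biggl(\begin{matrix} a_1,\dots,a_m\\ b_1,\dots,b_{m-1}\end{matrix}\biggm|x\biggr)
=\frac{\prod_{j=1}^{m-1}\Gamma(b_j)}{\prod_{i=1}^m\Gamma(a_i)}\Biggl(\frac{\Gamma(a_m)\prod_{j=1}^{m-1}\Gamma(a_j-a_m)}{\prod_{j=1}^{m-1}\Gamma(b_j-a_m)}
+\frac{w^{a_m}}{2\pi i}\int_{\Re t=-a_m-\delta}F(t)\,w^t\,\d t\Biggr).
\]

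It then remains to kill the last term as $w\to+\infty$. Writing $t=\sigma+i\tau$, on the line $\sigma=-a_m-\delta$ one has $|w^t|=w^{-a_m-\delta}$, while Stirling's formula gives $|F(t)|\le C\,|\tau|^{c}\,e^{-\pi|\tau|}$ for $|\tau|\ge1$ (the numerator of $F$ carries $m+1$ gamma factors against $m-1$ in the denominator, a surplus of two, each contributing a decay $e^{-\pi|\tau|/2}$). Hence that integral converges absolutely and is $O(w^{-a_m-\delta})$ uniformly for $w\ge1$, so multiplied by $w^{a_m}$ it is $O(w^{-\delta})\to0$. Cancelling $\Gamma(a_m)$ against the matching factor in $\prod_{i=1}^m\Gamma(a_i)$, the limit of the left-hand side equals $\prod_{j=1}^{m-1}\Gamma(a_j-a_m)\Gamma(b_j)\big/\bigl(\Gamma(a_j)\Gamma(b_j-a_m)\bigr)$, which is the asserted value.

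The one step deserving real care --- though it is entirely standard --- is the legitimacy of the contour shift: one must close the strip by horizontal segments at imaginary part $\pm T$ and check, again via Stirling, that their contribution vanishes as $T\to\infty$, and one must be sure that \emph{only} the pole $t=-a_m$ is crossed, which is precisely where the hypothesis that $a_m$ is the strictly smallest among $a_1,\dots,a_m$ (together with the freedom to shrink $\delta$) enters. Everything else is the bookkeeping of evaluating the gamma factors of $F$ at $t=-a_m$ and tidying the resulting ratio of gamma values.
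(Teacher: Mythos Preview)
Your proof is correct and follows exactly the approach the paper sketches: the paper's proof simply says that the limit is the residue of the Barnes integrand at the simple pole $t=-a_m$, citing \cite{AAR} for the $m=2$ case, and you have supplied the details (contour shift, Stirling bound on the remainder, residue bookkeeping) that the paper leaves implicit.
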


\begin{proof}
By the residue theorem, the limit can be interpreted as the residue of
$$
\frac{\Gamma(b_1)\dotsb\Gamma(b_{m-1})}{\Gamma(a_1)\dotsb\Gamma(a_m)}\,
\frac{\Gamma(a_1+t)\dotsb\Gamma(a_m+t)\,\Gamma(-t)}{\Gamma(b_1+t)\dotsb\Gamma(b_{m-1}+t)}\,(-x)^{a_m+t}
$$
at its simple pole $t=-a_m$ (cf.\ \cite[eqs.~(2.3.12) and (2.4.6)]{AAR} for the $m=2$ case).
\end{proof}

\begin{example}
\label{ex6}
Write the transformation~\eqref{bailey2} in the hypergeometric form
\begin{equation}
\label{bailey2b}
(1-4x)^{1/2}{}_3F_2\biggl(\begin{matrix} \frac12, \, \frac12, \, \frac12 \\ 1, \, 1 \end{matrix} \biggm|x\biggr)
={}_3F_2\biggl(\begin{matrix} \frac12, \, \frac16, \, \frac56 \\ 1, \, 1 \end{matrix} \biggm|-\frac{27x}{(1-4x)^3}\biggr).
\end{equation}
By Theorem~\ref{th2} we have
$$
\lim_{y\to-\infty}(-y)^{1/6}{}_3F_2\biggl(\begin{matrix} \frac12, \, \frac16, \, \frac56 \\ 1, \, 1 \end{matrix} \biggm|y\biggr)
=\frac{2\sqrt\pi}{\sqrt3\,\Gamma(\frac56)^3}
$$
and
\begin{multline*}
\lim_{y\to-\infty}(-y)^{1/2}(1+6\theta_y){}_3F_2\biggl(\begin{matrix} \frac12, \, \frac16, \, \frac56 \\ 1, \, 1 \end{matrix} \biggm|y\biggr)
=\lim_{y\to-\infty}(-y)^{1/2}{}_3F_2\biggl(\begin{matrix} \frac12, \, \frac76, \, \frac56 \\ 1, \, 1 \end{matrix} \biggm|y\biggr)
=\frac{2\sqrt3}\pi,
\end{multline*}
where $\theta_y:=y\dfrac{\d}{\d y}$.
By making the substitution $y=-27x/(1-4x)^3$, translating them to the left-hand side of \eqref{bailey2b} and taking the limit $x\to(1/4)^-$ we obtain
\begin{equation}
\label{bailey2c}
\sum_{n=0}^{\infty}\frac{(\frac12)_n^3}{n!^3}\,\frac{1}{4^n}
=\frac{2\sqrt[3]{2}}{3}\,\frac{\sqrt{\pi}}{\Gamma(\frac56)^3}
\end{equation}
and Ramanujan's identity~\eqref{rama-ex6}, respectively.
\qed
\end{example}

The algebraic transformations we use below are all of modular origin. However, as soon as they are given one can verify
them directly without any reference to the underlying modular functions. This makes our strategy ``elementary'' in the sense
``modular free.''

\section(Examples of Ramanujan's formulae for 1/\003\300){Examples of Ramanujan's formulae for $1/\pi$}
\label{s3}

\begin{example}
\label{ex1}
To prove Ramanujan's identities \eqref{e00} and \eqref{rama7-4} we use the transformation~\cite[eq.~(3.7)]{Ro}
\begin{equation}\label{rogers}
{}_3F_2\biggl(\begin{matrix} \frac12, \, \frac14, \, \frac34 \\ 1, \, 1 \end{matrix} \biggm|\frac{256k}{9(1+3k)^4}\biggr)
=\frac{1+3k}{1+k/3}\,{}_3F_2\biggl(\begin{matrix} \frac12, \, \frac14, \, \frac34 \\ 1, \, 1 \end{matrix} \biggm|\frac{256k^3}{9(3+k)^4}\biggr).
\end{equation}
Apply $k\dfrac{\d}{\d k}$ to the both sides of \eqref{rogers} and pass to the limit $k\to(1/9)^-$ using Theorem~\ref{th1}
with $x=256k/(9(1+3k)^4)$ on the left-hand side; the result is equation \eqref{rama7-4}. Similarly, equation \eqref{e00} follows
from application of Theorem~\ref{th2} to the function
\begin{equation}\label{F123}
(1+4\theta_x){}_3F_2\biggl(\begin{matrix} \frac12, \, \frac14, \, \frac34 \\ 1, \, 1 \end{matrix} \biggm|x\biggr)
={}_3F_2\biggl(\begin{matrix} \frac12, \, \frac54, \, \frac34 \\ 1, \, 1 \end{matrix} \biggm|x\biggr)
\end{equation}
and then letting $k\to(-1/3)^+$. We also have a side identity like in Example~\ref{ex6} from application of Theorem~\ref{th2}
directly to the $_3F_2$ hypergeometric function on the left-hand side of~\eqref{rogers}; namely,
\[
\sum_{n=0}^{\infty}\frac{(\frac12)_n(\frac14)_n(\frac34)_n}{n!^3} \biggl(-\frac{1}{48}\biggr)^n
=\frac{2^{3/2}\pi}{3^{5/4}\Gamma(\frac34)^4}.
\]

In a similar way, from the limiting cases $z\to(1/2)^-$ and $z\to(9/8)^-$ of the Kummer--Goursat transformation
(the square of~\cite[eq.~(25)]{Vid})
\begin{equation}\label{goursat}
{}_3F_2\biggl(\begin{matrix} \frac12, \, \frac13, \, \frac23 \\ 1, \, 1 \end{matrix} \biggm|4z(1-z)\biggr)
=\frac3{\sqrt{9-8z}}\,{}_3F_2\biggl(\begin{matrix} \frac12, \, \frac16, \, \frac56 \\ 1, \, 1 \end{matrix} \biggm|\frac{64z^3(1-z)}{(9-8z)^3}\biggr)
\end{equation}
we produce the Ramanujan-type formulae
\begin{equation*}
\sum_{n=0}^{\infty}\frac{(\frac12)_n(\frac16)_n(\frac56)_n}{n!^3}\,(1+11n)\biggl(\frac{4}{125}\biggr)^n=\frac{5\sqrt5}{2\pi\sqrt3}
\end{equation*}
and
\begin{equation*}
\sum_{n=0}^{\infty}\frac{(\frac12)_n(\frac13)_n(\frac23)_n}{n!^3}\,(1+5n)\biggl(-\frac{9}{16}\biggr)^n=\frac{4}{\pi\sqrt3},
\end{equation*}
respectively.
\qed
\end{example}

\begin{example}
\label{ex2}
In many cases, the leading term in the asymptotics of a binomial sum expression can be obtained by using Stirling's
formula and other standard tools of asymptotic analysis. For example, the growth of the sequence
$C_n=\sum_{j=0}^{n}\binom{n}{j}^4$ is determined from \cite[Theorem~1]{McI}:
$C_n\sim2^{4n+1/2}(\pi n)^{-3/2}$ as $n\to\infty$. On using the Stolz--Ces\`aro theorem we obtain
\begin{equation}\label{limit-yy}
\lim_{x\to(1/16)^{-}} \sqrt{1-16x}\sum_{n=0}^{\infty}C_nnx^n
=\lim_{n\to\infty}\frac{C_nn/16^n}{(\frac12)_n/n!}
=\frac{\sqrt{2}}{\pi}.
\end{equation}
On the other hand, the generating function $\sum_{n=0}^\infty C_nx^n$ can be written hypergeometrically;
namely, applying \cite[formula (2-53)]{LR}, Clausen's formula and \cite[Theorem 3.6]{Coo0} (see also \cite{Coo2})
we have
\begin{align}
&
\sum_{n=0}^{\infty} C_n\biggl(\frac{k(1-k^2)(1+k-k^2)(1-4k-k^2)}{(1+k^2)^4}\biggr)^n
\nonumber\\ &\qquad
=\frac{(1+k^2)^2}{1+22k-6k^2-22k^3+k^4}
\nonumber\\ &\qquad\quad\times
{}_3F_2\biggl(\begin{matrix} \frac12, \, \frac14, \, \frac34 \\ 1, \, 1 \end{matrix} \biggm|
\frac{256k(1-k^2)(1+k-k^2)^5(1-4k-k^2)^5}{(1+k^2)^4(1+22k-6k^2-22k^3+k^4)^4}\biggr).
\label{transfor-zu}
\end{align}
Apply $k\dfrac{\d}{\d k}$ to the both sides of \eqref{transfor-zu} and compute the limit as
$k\to(\sqrt{10+4\sqrt5}-2-\sqrt5)^-$ which corresponds to $x\to(1/16)^-$ in~\eqref{limit-yy}
on the left-hand side of the resulting identity. This produces the first ``elementary'' proof
of Ramanujan's identity~\eqref{rama-10-1}, as no translation of a WZ provable identity is known for it.

Another limit as
$$
k\to\biggl(\frac{11+5\sqrt5-\sqrt{250+110\sqrt5}}2\biggr)^+
$$
corresponds to taking
$$
y(k):=\frac{256k(1-k^2)(1+k-k^2)^5(1-4k-k^2)^5}{(1+k^2)^4(1+22k-6k^2-22k^3+k^4)^4}\to-\infty.
$$
Therefore, combining Theorem~\ref{th2} applied to the function \eqref{F123} with the transformation~\eqref{transfor-zu}
we deduce the Ramanujan-type formula
$$
\sum_{n=0}^{\infty}\sum_{j=0}^n\binom{n}{j}^4\,(1+3n)\biggl(-\frac1{20}\biggr)^n
=\frac5{2\pi}
$$
given previously in \cite[Theorem~5.2]{Coo2}. Applying Theorem~\ref{th2} directly to the $_3F_2$ series
on the right-hand side of~\eqref{transfor-zu} we obtain the side evaluation
$$
\sum_{n=0}^{\infty}\sum_{j=0}^n\binom{n}{j}^4\biggl(-\frac1{20}\biggr)^n
=\frac{\pi}{5^{1/4}\Gamma(\frac34)^4}
$$
which seems to be new.
\qed
\end{example}

\begin{example}
\label{ex3}
This time we deal with the level~7 sequence~\cite{Coo}
$$
C_n=\sum_{j=0}^{n}\binom{n}{j}^2\binom{2j}{n}\binom{n+j}{j}
\sim\frac{27^{n+1/2}}4\,(\pi n)^{-3/2} \quad \text{as $n\to\infty$},
$$
so that
\begin{equation}\label{limit-yy28}
\lim_{x\to(1/27)^{-}} \sqrt{1-27x}\sum_{n=0}^{\infty}C_nnx^n
=\lim_{n\to\infty}\frac{C_nn/27^n}{(\frac12)_n/n!}
=\frac{3\sqrt{3}}{4\pi}
\end{equation}
by the Stolz--Ces\`aro theorem. It follows from \cite[Theorem~3.1]{Coo} and \cite[Lemmas 4.1 and 4.3]{CC1} that
\begin{multline}
\label{id7}
\sum_{n=0}^\infty C_n\biggl(\frac h{1+13h+49h^2}\biggr)^n
=\frac{\sqrt{1+13h+49h^2}}{\sqrt{1+5h+h^2}}
\\ \times
\sum_{n=0}^\infty\frac{(\frac16)_n(\frac12)_n(\frac56)_n}{n!^3}
\biggl(\frac{1728h^7}{(1+5h+h^2)^3(1+13h+49h^2)}\biggr)^n.
\end{multline}
Apply $h\dfrac{\d}{\d h}$ to the both sides of this transformation and compute the limit as
$h\to(1/7)^-$ which corresponds to $x\to(1/27)^-$ in~\eqref{limit-yy28}
on the left-hand side. The result is Ramanujan's series \eqref{rama28} which, in turn, is equivalent to~\eqref{rama-64}.
\qed
\end{example}

\begin{example}
\label{ex4}
Let $\sum_{n=0}^\infty A_nx^n$, with $A_0=1$, be the holomorphic solution of the differential equation $\mathcal Dy=0$, where
\begin{align*}
\mathcal D&:=\theta^3-x(2\theta+1)(11\theta^2+11\theta+5)
+x^2(\theta+1)(121\theta^2+242\theta+141)
\\ &\qquad
-98x^3(\theta+1)(\theta+2)(2\theta+3)
\end{align*}
and $\theta=\theta_x=x\dfrac{\d}{\d x}$. The equation is equivalent to a difference equation with polynomial in $n$
coefficients, for the sequence $A_n$ itself, and the Birkhoff--Trjitzinsky method~\cite{WiZe,Ze} allows us to determine
asymptotic behaviour of the latter:
$$
A_n\sim\frac{C(1+2\sqrt2)^{2n}}{n^{3/2}}\,\biggl(1+\frac{c_1}n+\frac{c_2}{n^2}+\dotsb\biggr) \quad\text{as $n\to\infty$},
$$
with the numerical value $C=0.44846956652386\dots$ which can be recognised as
$$
C=\frac{\sqrt{184+11\sqrt2}}{4\sqrt2\pi^{3/2}}.
$$
(An efficient strategy to compute the number is outlined in \cite[Section~3]{Za}.)

As D.~Zeilberger writes in~\cite{Ze}, ``The Birkhoff--Trjitzinsky method suffers from one drawback.
It only does the asymptotics up to a multiplicative constant $C$. But nowadays this is hardly a problem.
Just crank-out the first ten thousand terms of the sequence using the very recurrence whose asymptotics you are trying to find,
not forgetting to furnish the few necessary initial conditions, and then estimate the constant empirically.
If you are lucky, then Maple can recognize it in terms of `famous' constants like $e$ and $\pi$, by typing \texttt{identify(C);}.''

In order to not only discover the constant $C$ numerically but also to prove it rigorously, we need more information
about the sequence $A_n$, for example, a binomial sum expression for its terms.

The above asymptotics is equivalent to
\begin{equation}\label{lim-yy14}
\lim_{x\to r^-} \sqrt{(1-4x)(1-18x+49x^2)}\sum_{n=0}^\infty A_nnx^n = \frac{\sqrt{14}}{2 \pi},
\end{equation}
where $r=1/(1+2\sqrt2)^2$, through the Stolz--Ces\`aro theorem. The underlying modular structure (of level~14)
allows us to link the series $\sum_{n=0}^\infty A_nx^n$ to the (level~7) series $\sum_{n=0}^\infty C_nx^n$ from Example~\ref{ex3}:
\begin{equation}
\frac1{(1+v)(1+8v)}\sum_{n=0}^\infty A_n\biggl(\frac v{(1+v)(1+8v)}\biggr)^n
=\frac1{(1+4v)^2}\sum_{n=0}^\infty C_n\biggl(\frac v{(1+4v)^3}\biggr)^n.
\label{id14}
\end{equation}
% $v=1/J_{14B}=\biggl(\frac{\eta_2\eta_{14}}{\eta_1\eta_7}\biggr)^3$ from \cite[Table~3]{CN}
The value $r$ corresponds to the choice $v=2^{-3/2}$, so that equation~\eqref{lim-yy14} translates into the formula
\begin{equation*}
\sum_{n=0}^\infty\sum_{j=0}^{n}\binom{n}{j}^2\binom{2j}{n}\binom{n+j}{j}
\bigl(8\sqrt2-11+(13\sqrt2-17)n\bigr)
\biggl(\frac{\sqrt2-1}{\sqrt2}\biggr)^{3n}
=\frac{\sqrt{16\sqrt2+13}}{\pi\sqrt7}
\end{equation*}
of Ramanujan type; it can be also derived from the results in \cite[Section~4]{Coo}
(it corresponds to taking $N=2$, $q=e^{-2\pi\sqrt{2/7}}$, $y_7=(1-1/\sqrt2)^3$ and $\lambda=(3-\sqrt2)/7$
in the notation in~\cite{Coo}). With the help of~\eqref{id14}, \eqref{id7}
and the known algebraic transformations of $_3F_2$ hypergeometric series, like \eqref{bailey2} and \eqref{goursat} above,
we can obtain some more Ramanujan-type formulae for $1/\pi$; for example,
\begin{equation*}%\label{rama14}
\sum_{n=0}^{\infty}\frac{(\frac12)_n(\frac14)_n(\frac34)_n}{n!^3}
\,(9-3\sqrt2+56n)\biggl(\frac{1}{11+8\sqrt2}\biggr)^{2n}
=\frac{2\sqrt{25+22\sqrt2}}{\pi}.
\end{equation*}
The latter identity can be found in \cite[eq.~(4.75)]{BCL}.
\qed
\end{example}

Here seems to be a right place to briefly comment about underlying modular parameterisations.
Any known generating function $\sum_{n=0}^\infty A_nx^n$ which gives rise to Ramanujan-type series for $1/\pi$
is annihilated by a third order linear differential equation $\mathcal D=\theta^3+\dotsb\in\mathbb Q[\theta](x)$,
at least in the genus~0 cases as defined by Conway and Norton~\cite{CN}. Define $P(x)\in\mathbb Q(x)$
to be the formal limit of $\mathcal D/\theta^3$ as $\theta\to\infty$ (see also \cite[Section~2]{AlGu2}); it is the polynomial
\[
1-2\cdot11x+121x^2-2\cdot98x^3
=(1-4x)(1-18x+49x^2)
\]
in Example~\ref{ex4} above. Then the Conway--Norton modular parameterisation~\cite[Table~3]{CN} $x(\tau)=1/J_G(\tau)$
and the use of the Stolz--Ces\`aro theorem result in
\begin{equation}\label{limit-general}
\lim_{x \to r^-} \sqrt{P(x)} \sum_{n=0}^{\infty} A_n n x^n = \frac{\sqrt{\ell}}{2\pi}
\end{equation}
where $\ell$ is the level of the corresponding modular group~$G$ and $r$ is a suitable (usually, the smallest) zero
of~$P(x)$. This can be counted as an arithmetic generalisation of the $m=1$ case of Theorem~\ref{th1}; a heuristic
analogue of \eqref{limit-general} exists for $m=2$ as well~\cite{AlGu2}. As hinted in~\cite{Zu5},
the semi-Ramanujan identity~\eqref{limit-general} can be further translated (at least, in theory) into other Ramanujan-type formulae
for $1/\pi$ whenever the modular parameter $\tau$ is chosen from the same quadratic field $\mathbb Q(\sqrt{-\ell})$.

\section{Multi-parameter translation}
\label{s4}

\begin{example}
\label{ex7}
Begin with an identity due to Whipple \cite[eq.~(2.2)]{GS}:
\begin{multline*}
(1-x)^{1/2+p}\,{}_3F_2\biggl(\begin{matrix} \frac12+p, \, \frac12+q, \, \frac12+r \\ 1+p-q, \, 1+p-r \end{matrix} \biggm|x\biggr)
\\
={}_3F_2\biggl(\begin{matrix} \frac12+p-q-r, \, \frac14+\frac12p, \, \frac34+\frac12p \\ 1+p-q, \, 1+p-r \end{matrix} \biggm|-\frac{4x}{(1-x)^2}\biggr).
\end{multline*}
Then apply the operator $\theta_x$ and take the limit as $x\to(-1)^{+}$ using Theorem~\ref{th1} on the right-hand side,
to obtain the identity
\begin{multline*}
\sum_{n=0}^{\infty}\frac{(\frac12+p)_n(\frac12+q)_n(\frac12+r)_n}{n!\,(1+p-q)_n(1+p-r)_n}\,(1+2p+4n)\,(-1)^n
\\
=\frac{2^{1-p}}{\pi}\,\frac{(1)_{p-q}(1)_{p-r}}{(\frac12)_{p-q-r}(\frac14)_{p/2}(\frac34)_{p/2}}
=\frac{2\Gamma(1+p-q)\Gamma(1+p-r)}{\Gamma(\frac12+p-q-r)\Gamma(\frac12+p)}.
\end{multline*}
This generalisation was first obtained in \cite{Chu} using a different method.

In a similar way, the limiting $x\to(-1/8)^+$ case of Bailey's identity \cite[eq.~(5.3)]{GS}
\begin{multline}\label{Gessel-St1}
(1-4x)^{1/2+2p}\,{}_3F_2\biggl(\begin{matrix} \frac12+2p, \, \frac12-2q, \, \frac12+2q \\ 1+p-q, \, 1+p+q \end{matrix} \biggm|x\biggr)
\\
={}_3F_2\biggl(\begin{matrix} \frac12+\frac23p, \, \frac16+\frac23p, \, \frac56+\frac23p \\ 1+p-q, \, 1+p+q \end{matrix} \biggm|
-\frac{27x}{(1-4x)^3}\biggr),
\end{multline}
which is a general form of~\eqref{bailey2},
leads us to the following generalised Ramanujan-type series:
\begin{multline*}
\sum_{n=0}^\infty\frac{(\frac12+2p)_n(\frac12-2q)_n(\frac12+2q)_n}{n!\,(1+p-q)_n(1+p+q)_n(1)_n}
\,(1+4p+6n)\biggl(-\frac18\biggr)^n
\\
=\frac{2^{5/2+2p}\pi^{1/2}}{3^{2p}}\,
\frac{\Gamma(1+p-q)\Gamma(1+p+q)}{\Gamma(\frac12+\frac23p)\Gamma(\frac16+\frac23p)\Gamma(\frac56+\frac23p)},
\end{multline*}
which was first obtained in \cite{ChDi} in a different way.

If instead we compute the limit as $x\to(1/4)^{+}$ of \eqref{Gessel-St1} using Theorem~\ref{th2} on the right-hand side,
we find that
\begin{multline*}
\sum_{n=0}^\infty\frac{(\frac12+2p)_n(\frac12-2q)_n(\frac12+2q)_n}{n!\,(1+p-q)_n(1+p+q)_n}\,\frac{1}{4^n}
\\
=\frac{2^{4(1+p)/3}\pi}{3^{1+2p}}\,
\frac{\Gamma(1+p-q)\Gamma(1+p+q)}
{\Gamma(\frac12+\frac23p)\Gamma(\frac56+\frac23p)\Gamma(\frac56+\frac13p-q)\Gamma(\frac56+\frac13p+q)}.
\end{multline*}
which generalises equation~\eqref{bailey2c}.
\qed
\end{example}

Due to the presence of extra parameters, the identities in Example~\ref{ex7} can be directly shown by the Wilf--Zeilberger method.

\begin{example}
\label{ex5}
Take
\[
A_n = \frac{(\frac14)_n(\frac34)_n}{n!^2}\sum_{j=0}^n\frac{(\frac12)_j^3(\frac12)_{n-j}}{j!^3(n-j)!}
\]
and consider the related transformation \cite[Theorem~2]{Zu1}
\[
\sum_{n=0}^{\infty}A_n\biggl(-\frac{4x}{(1-x)^2}\biggr)^n
=\sqrt{1-x}\sum_{n=0}^\infty\frac{(\frac12)_n^5}{n!^5}\,x^n.
\]
$\theta_x$-Differentiating and taking the limit as $x\to(-1)^+$, we recover the identity
\[
\sum_{n=0}^\infty\frac{(\frac12)_n^5}{n!^5}\,(1+4n)\,(-1)^n
=\frac{2}{\Gamma(\frac34)^4},
\]
which is due to Ramanujan. The general case of the transformation \cite[Theorem~1]{Zu1}
results in an identity also recorded by Ramanujan \cite[Entry~31, p.~41]{Be2} which, in turn,
is a special case of Whipple's transformation.
\qed
\end{example}

\begin{comment}
{\em THE NEXT EXAMPLE NEEDS EXTRA WORK}
\begin{example}
\label{ex10}
We know that
\begin{equation}
\sum_{n=0}^{\infty} \frac{\left( \frac12 \right)_n\left( \frac14 \right)_n\left( \frac34 \right)_n}{(1)_n^3}y^n=(1+27z) \sum_{n=0}^{\infty} \xi_n z^n,
\end{equation}
where $\xi_n$ are the Almkvist--Zudilin numbers and
\[
y=\frac{256z}{(1+27z)^4}.
\]
On the other hand we know (WZ-proof) that
\begin{equation}
\sum_{n=0}^{\infty} \frac{\left( \frac12 \right)_n\left( \frac14+\frac{3k}{2} \right)_n\left( \frac34+\frac{3k}{2} \right)_n} {(1)_n(1+k)_n^2}\left(\frac19\right)^n (8n+6k+1)=\frac{2\sqrt3}{\pi}\frac{(1)_k^2}{\left( \frac16 \right)_k\left( \frac56 \right)_k}.
\end{equation}
This suggest to write
\begin{equation}
\sum_{n=0}^{\infty} \frac{\left( \frac12 \right)_n\left( \frac14+\frac{3k}{2} \right)_n\left( \frac34+\frac{3k}{2} \right)_n}{(1)_n(1+k)_n^2} y^n=(1+27z) \sum_{n=0}^{\infty} \xi_n(k) z^n.
\end{equation}
Let $\theta$ be the operator $y \, d/dy$. As the left hand side satisfies (Is this correct?) the differential equation
\[
\left[\theta(\theta+k)^2-y\left(\theta+\frac12\right)\left(\theta+\frac14+\frac{3k}{2}\right)\left(\theta+\frac34+\frac{3k}{2}\right)\right]y=0,
\]
the right hand side satisfies the same differential equation, and we can use this fact to obtain the recurrence satisfied by the numbers $\xi_n(k)$.
Is this idea correct? If it is then this identity implies the WZ-proof supposing that it is possible to get the asymptotic behaviour of $\xi_n(k)$ as $n \to \infty$, and we get free of the $\xi_n(k)$ numbers.
\end{example}
\end{comment}

\section{Conclusion}
\label{s5}

It seems plausible that a suitable generalisation of the translation techniques considered in the paper,
maybe an extension of a purely computational nature, will make it possible to prove some experimentally observed
but not yet proven formulae, like
\begin{align}
\sum_{n=0}^\infty\frac{(\frac12)_n(\frac18)_n(\frac38)_n(\frac58)_n(\frac78)_n}{n!^5}\,
(15+304n+1920n^2)\,\frac1{7^{4n}}
&=\frac{56\sqrt7}{\pi^2},
\label{jg74}
\\
\sum_{n=0}^\infty\frac{(\frac12)_n(\frac14)_n(\frac34)_n(\frac13)_n(\frac23)_n}{n!^5}\,
(5+63n+252n^2)\biggl(-\frac{1}{48}\biggr)^n&=\frac{48}{\pi^2},
\nonumber\\
\sum_{n=0}^\infty\frac{(\frac12)_n^7}{n!^7}\,(1+14n+76n^2+168n^3)\,\frac1{2^{6n}}
&=\frac{32}{\pi^3}.
\nonumber
\end{align}
Note the resemblance of these equations with \eqref{rama7-4}, \eqref{e00} and \eqref{rama-64}, respectively,
and the fact that for each of the latter we already know ``suitable'' transformations from Examples~\ref{ex1}
and~\ref{ex3}. For instance, the proof of~\eqref{jg74} might go as follows.

Define the sequence $C_n$ by the equation
\begin{multline}\label{n-tr}
\sum_{n=0}^\infty C_n\biggl(\frac{256k}{9(1+3k)^4}\biggr)^n
\\
=\frac{\sqrt{1+3k}}{\sqrt{1+k/3}}
\sum_{n=0}^\infty\frac{(\frac12)_n(\frac18)_n(\frac38)_n(\frac58)_n(\frac78)_n}{n!^5}
\,(5+(30-54k)n)\biggl(\frac{256k^3}{9(3+k)^4}\biggr)^n.
\end{multline}
Then the generating function $\sum_{n=0}^\infty C_nx^n$ satisfies a linear differential equation, equivalently,
the sequence $C_n$ satisfies a difference equation with polynomial coefficients.
Assuming we can guess from the equation a hypergeometric (binomial) expression for the terms $C_n$,
we can find the asymptotics
$$
C_n\sim2\sqrt6\pi^{-5/2}n^{-3/2} \quad\text{as $n\to\infty$},
$$
so that
$$
\lim_{x\to1^-}\sqrt{1-x}\sum_{n=0}^\infty C_nnx^n
=\frac{2\sqrt6}{\pi^2}
$$
by the Stolz--Ces\`aro theorem. Applying $k\dfrac{\d}{\d k}$ to the both sides of \eqref{n-tr} and
computing the limit as $k\to(1/9)^-$ give us the desired identity \eqref{jg74}.

\smallskip
Another aspect of the method is its potential applicability to proving so-called Ramanujan-type
supercongruences~\cite{Gu6,GuiZu,Zu0}. This belief is supported by some recent results of L.~Long and
her collaborators \cite{KLMSY,Long}.

\section*{Acknowledgements}
We are grateful to Alexander Aycock and Anton Mellit for original, though implicit, inspiration to launch the project.
Special thanks go to Gert Almkvist, Heng Huat Chan, Shaun Cooper, Arne Meurman and James Wan for fruitful conversations on parts of this work.
Cooper's comments significantly helped us to improve the current presentation.

\end{document}